 \newtheorem{theorem}{Theorem}[section]
 \newtheorem{corrolory}[theorem]{Corollary}
 \newtheorem{lemma}[theorem]{Lemma}
 \theoremstyle{definition}
 \newtheorem{definition}[theorem]{Definition}
 \theoremstyle{remark}
 \newtheorem{example}{Example}
 \numberwithin{equation}{section}
\begin{document}

%
%
%
%
%
%
%
%
%

\title[]
 {Warped product pointwise bi-slant submanifolds of locally product Riemannian manifolds}


\author[Majeed]{Prince Majeed}

\address{%
	Department of Mathematics,\\National Institute of Technology Srinagar, \\
	190006, Kashmir, India.}

\email{prince$_{-}$05phd19@nitsri.net}

\author[Lone]{Mehraj Ahmad Lone}

\address{%
	Department of Mathematics,\\National Institute of Technology Srinagar, \\
	190006, Kashmir, India.}

\email{mehrajlone@nitsri.ac.in}


\subjclass{53C15, 53C25, 53C40, 53C42}

\keywords{Locally product Riemannian manifold, Pointwise bi-slant submanifolds, Warped products.}


\begin{abstract}
In this paper we introduce the concept of pointwise bi-slant submanifolds of locally product Riemannian manifolds and studied warped product pointwise bi-slant submanifolds of locally product Riemannian manifolds.  We obtain some characterization results for warped products pointwise bi-slant submanifolds. Also, we provide some non-trivial examples of such warped product submanifolds.  
\end{abstract}

\maketitle

\section{\protect \bigskip Introduction} 	
\par In \cite{chen2005}, Chen introduced the  notion of slant submanifolds. It includes totally real as well as holomorphic submanifolds. Numerous geometer groups continue to study and conduct research on this idea of submanifolds. Recently, the related literature of slant submanifolds has been compiled in the form of two books by Chen, Shahid and Solamy (see \cite{chen a,chen b}). Since the introduction of slant submanifolds,  many generalizations and extensions of slant submanifolds have been introduced, like: semi-slant, pointwise slant, hemi-slant, pointwise hemislant and many more. The related literature of these kind of generalizations can be be found in (see, \cite{Garay,Etayo,VKhan,Lone,stonvic}).  A more generic class of submanifolds in the form of bi-slant submanifolds was introduced by Cabrerizo and Cariazo \cite{Cabrerizo}. This class of submanifolds acts as a natural generalization of CR, semi-slant,  slant,  hemi-slant submanifolds \cite{VKhan,Lone,papaghuic}. In connection to this generic notion of submanifolds, some of recent studies can be found in \cite{KON}. Further the extended notion of pointwise bi-slant submanifolds of Kaehler manifolds can be found in \cite{Uddin2}.\\

\par Bishop and O’Neill in $1960s$ introduced the concept of warped product manifolds. These manifolds find their applications both in physics as well as in mathematics.  Since then the study of warped product submanifolds has been investigated by many geometers (see, \cite{solamy,ch,chnn,Chnnn}).  In particular,  Chen started looking these warped products as submanifolds of different kinds of manifolds (see, \cite{5,6}). In this connection,  in Kaehlerian settings, he proved  besides CR- products the non-existence of warped products  of the form $N^\perp \times_f N^T$,  where $N^\perp,$ $N^T$  is a totally real  and holomorphic submanifold,  respectively \cite{Uddin}.  Now from the past two decades this area of research is an active area of research among many of the geometry groups.  For the overall development of the subject we refer the reader to see Chen's book on it \cite{Chenslant}.\\

\par Now while importing the survey of warped products to  slant cases,  Sahin  in \cite{B.sahin} proved the non-existence of  semi-slant warped products in any Kaehler manifold. Then in \cite{B.sahinn} he extended the study to pointwise semi-slant warped products of Kaeherian manifolds. Sahin \cite{Bb.Sahinnn} and Atcken \cite{A.Atcken} investigated warped product semi-slant submanifolds of locally product Riemannain manifolds. They proved there is no warped product semi-slant submanifold of the form $M_T \times_f M_\theta$ of a locally product Riemannian manifold $\bar{M}$ such that $M_T$ and $M_\theta$ are invariant and proper slant submanifolds of $\bar{M}$, respectively. Moreover they provided non-trival examples and proved a characterization  theorem for warped product semi-slant submanifolds of the form $M_\theta \times_f M_T$.\\
\par The main motivation for our paper is a recent study of Uddin, Alghamdi and Solamy \cite{JGP},  in which they have studied the geometry of warped product pointwise semi-slant submanifolds of locally product Riemannian manifolds. In this paper, we try to generalize the notion to bi-slant warped products of locally product Riemannain manifolds. We proved several results on pointwise bi-slant submanifolds of locally product Riemannain manifolds, in addition, we proved some characterization results for pointwise bi-slant submanifolds of locally product Riemannain manifolds. Later,  we also provide some non-trivial examples of such submanifolds. 
\section{Preliminaries}
Let $\bar{M}$ be an m-dimensional differential manifold with a tensor field $F$ of type (1,1) such that $F^2 = I$ and $F \neq \pm I$. Then we say that $\bar{M}$ is an almost product manifold with almost product structure $F$. If an almost product manifold $\bar{M}$ has a Riemannian metric $g$ such that
\begin{align}\label{2.1}
	g(FX,FY) = g(X,Y),
\end{align}
for any $X, Y \in \Gamma(T\bar{M})$, then $\bar{M}$ is called an almost product Riemannian manifold\cite{yono}, where  $\Gamma(T\bar{M})$ denotes the set of all vector fields of $\bar{M}$. Let $\bar{\nabla}$ denotes the Levi-Civita connection on $\bar{M}$ with respect to the Riemannian metric $g$. if $(\bar{\nabla}_XF)Y = 0$, for any vector $ X, Y \in \Gamma(T\bar{M})$, the $\bar{M}$ is called a locally product Riemannian manifold \cite{li}.\\
Let $M$ be a Riemannian manifold isometrically immersed in $\bar{M}$ and we denote by the symbol $g$ the Riemannian metric induced on $M$. Let $\Gamma(TM)$ denote the Lie algebra of vector fields in $M$ and $\Gamma(T^\perp M)$, the set of all vector fields normal to $M$. If $\nabla$ be the induced Levi-Civita connection on $M$, the Gauss and Weingarten formulas are respectively given by 
\begin{align}\label{2.2}
	\bar{\nabla}_XY = \nabla_XY + \sigma(X,Y),
\end{align}
and 
\begin{align}\label{2.3}
	\bar{\nabla}_XN = -A_NX + \nabla^\perp_XN,
\end{align}
for any $ X,Y \in \Gamma(TM)$ and $N \in \Gamma(T^\perp M)$, where $\nabla^\perp$ is the normal connection on $T^\perp M$ and $A$ the shape operator. The shape operator and the second fundamental form of $M$ are related by
\begin{align}\label{2.4}
	g(A_NX,Y) = g(\sigma(X,Y),N),
\end{align}
for any $ X,Y \in \Gamma(TM)$ and $N \in \Gamma(T^\perp M)$, and $g$ denotes the induced metric on $M$ as well as the metric on $\bar{M}$.\\
For a tangent vector field $X$ and a normal vector field $N$ of $M$, we can write
\begin{align}\label{2.5}
	FX = TX + \omega X , \hspace*{0.5cm} FN = BN +CN,
\end{align}
where $TX$ and $\omega X$ (respectively, $BN \hspace*{0.1cm}\textnormal{and}\hspace*{0.1cm} CN$) are the tangential and normal components of $FX$ (respectively, of $FN$).\\
Moreover, from (\ref{2.1}) and (\ref{2.5}), we have
\begin{align}\label{2.6}
	g(TX,Y) = g(X,TY),
\end{align}
for any $X ,Y \in \Gamma(TM)$.\\
We can now specify the following classes of submanifolds of locally product Riemannian manifolds:\\
(1) A submanifold $M$ of a locally product Riemannian manifold $\bar{M}$ is said to be slant (see \cite{Atcken,chen2005,sahinn}), if for each non-zero vector $X$ tangent to $M$, the angle $\theta(X)$ between $FX$ and $T_pM$ is a constant, i.e., it does not depend on the choice of $ p \in M$ and $ X \in T_pM$.\\
(2) A submanifold $M$ of a locally product Riemannian manifold $\bar{M}$ is called semi-invariant submanifold (see \cite{bejancu,sahin11}) of $\tilde{M}$ if there exists a differentiable distribution $D : p \rightarrow D_p \subset T_pM$ such that $D$ is invariant with respect to $F$ and the complementary distribution $D^\perp$ is anti-invariant with respect to $F$.\\
(3) A submanifold $M$ of a locally product Riemannian manifold $\bar{M}$ is called semi-slant (see \cite{li,papaghuic}), if it is endowed with two orthogonal distributions $D$ and $D^\theta$, where $D$ is invariant with respect to $F$ and $D^\theta$ is slant, i.e., $\theta(X)$ is the angle between $FX$ and $D^\theta_p$ is constant for any $X \in D^\theta_p$ and $p \in M$.
\begin{definition}
	A submanifold $M$ of a locally product Riemannian manifolld $\bar{M}$ is called pointwise slant \cite{Gulbahar}, if at each point $p \in M$, the Wirtinger angle $\theta(X)$ between $FX$ and $T_pM$ is independent of the choice of the non-zero vector $X \in T_pM$. In this case, the Wirtinger angle gives rise to a real valued function $\theta : TM - \{0\} \rightarrow \mathbb{R}$ which is called Wirtinger function or slant function of the pointwise slant function.
\end{definition}
We note that a pointwise slant submanifold of a locally product Riemannian manifold is called slant, in the sense of \cite{Atcken,sahinn}, if its Wirtinger function $\theta$ is globally constant. We also note that every slant submanifold is a pointwise slant slant submanifold.
\par From Chen's result (Lemma 2.1) of \cite{Garay}, we can easily show that $M$ is a pointwise slant submanifold of a locally product Riemannian manifold $\bar{M}$ if and only if 
\begin{align}\label{2.7}
	T^2 = (\cos^2 \theta)I,
\end{align}
for some real-valued function $\theta$ defined on $M$, where $I$ denotes the identity transformation of the tangent bundle $TM$ of $M$. The following relations are the consequences of (\ref{2.7}) as 
\begin{align}\label{2.8}
		g(TX,TY) = (\cos^2\theta)g(X,Y), \hspace{0.5cm} g(\omega X,\omega Y) = (\sin^2\theta)g(X,Y).
\end{align}
Also, for a pointwise bi-slant submanifold of locally product Riemannian manifold, (\ref{2.5}) and (\ref{2.7}) yields
\begin{align}\label{2.9}
	B\omega X = (\sin^2\theta)X \hspace{0.5cm} C\omega X = - \omega TX.
\end{align}
\section{Pointwise bi-slant submanifolds}
In this section, we define and study pointwise bi-slant submanifolds of a locally product Riemannian manifold. 
\begin{definition}
	Let $\bar{M}$ be a locally product Riemannian manifold and $M$ a real submanifold of $\bar{M}$. The we say $M$ is a bi-slant submanifold if there exists a pair of orthogonal distributions $D_1$ and $D_2$ of $M$, at a point $ p \in M$ such that \\
(a) $ TM = D_1 \oplus D_2$;\\
(b) $ JD_1 \perp D_2$ and $JD_2 \perp D_1$;\\
(c) The distributions $D_1, D_2$ are pointwise slant with slant functions $\theta_1 , \theta_2 $, respectively.\\
\par The pair $\{ \theta_1, \theta_2\}$ of slant functions  is called the bi-slant function. A pointwise bi-slant submanifold $M$ is called proper if its bi-slant function satisfies $ \theta_1, \theta_2 \neq 0, \frac{\pi}{2}$ and both $\theta_1, \theta_2$ are not constant on $M$.	
\end{definition}
Note that (\ref{2.5}) and condition $(b)$ in the Definition 3.1 imply that
\begin{align}\label{3.1}
		T(D_i) \subset D_i, \ i = 1,2.
\end{align} 
Given a pointwise bi-slant submanifold $M$ of locally product Riemannian manifold $\bar{M}$, for any $ X \in \Gamma(TM)$, we put
\begin{align}\label{3.2}
	X = P_1X + P_2X 
\end{align}
where $P_i$ is the projection from $\Gamma(TM)$ onto $D_i$. Clearly, $P_iX$ is the components of $X$ in $D_i, i = 1, 2$. In particular, if $X \in D_i$, we have $ X = P_iX$. If we put $T_i = P_i \circ T$, then we can find (\ref{3.2}) that
\begin{align}\label{3.3}
		FX = T_1X + T_2X + \omega X,
\end{align} 
for any $ X \in \Gamma(TM)$.\\
From now onwards, we assume the ambient manifold $\bar{M}$ is locally product Riemannian manifold and $M$ is pointwise bi-slant submanifold in $\bar{M}$.\\
Now we give the following useful lemma for later use.
\begin{lemma}
	Let $M$ be a pointwise bi-slant submanifold of a locally product Riemannian manifold $\bar{M}$ with pointwise slant distributions $D_1$ and $D_2$ with distinct slant functions $\theta_1$ and $\theta_2$, respectively. Then\\
	(i) For $ X, Y \in D_1$ and $Z \in D_2$, we have
	\begin{eqnarray}\label{3.4}
		(\sin^2\theta_2 - \sin^2\theta_1)g(\nabla_XY,Z) &=&\nonumber g\big\{(\sigma(X,Z),\omega T_1Y) + (\sigma(X,T_2Z),\omega Y)\big\} \\ \nonumber && + g\big\{(\sigma(X,Y),\omega T_2Z) + (\sigma(X, T_1Y),\omega Z)\big\}.\\
	\end{eqnarray}
(ii) For $ Z, W \in D_2$ and $ X \in D_1$, we have 
\begin{eqnarray}\label{3.5}
	(\sin^2\theta_1 - \sin^2\theta_2)g(\nabla_ZW,X) &=&\nonumber g\big\{(\sigma(X,Z),\omega T_2W) + (\sigma(Z,T_1X),\omega W)\big\} \\ \nonumber && + g\big\{(\sigma(Z,W),\omega T_1X) + (\sigma(Z, T_2W),\omega X)\big\}.\\
\end{eqnarray}
\end{lemma}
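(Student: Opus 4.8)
The plan is to establish (i) in detail and then deduce (ii) by interchanging the two distributions. The two structural facts I will lean on throughout are that $F$ is parallel, $\bar{\nabla}_X FY = F\bar{\nabla}_X Y$, and that $F$ is self-adjoint for $g$: combining $F^2=I$ with (\ref{2.1}) gives $g(FU,V)=g(U,FV)$ for all $U,V\in\Gamma(T\bar M)$. I will also use, from (\ref{3.1}), that $T(D_i)\subset D_i$, so that for $Y\in D_1$ one has $TY=T_1Y$ and $T^2Y=(\cos^2\theta_1)Y$, and symmetrically on $D_2$, together with (\ref{2.9}).

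First I would record the trivial reduction $g(\nabla_XY,Z)=g(\bar{\nabla}_XY,Z)$, valid since $\sigma(X,Y)$ is normal while $Z$ is tangent. The engine of the argument is the identity $g(\bar{\nabla}_X FY,FZ)=g(\nabla_XY,Z)$, got by moving one copy of $F$ across the metric (self-adjointness) and the other through the connection (parallelism). Expanding $FY=T_1Y+\omega Y$ and $FZ=T_2Z+\omega Z$ by (\ref{3.3}) splits the left side into four pieces. The two mixed tangential--normal pieces collapse to second fundamental form terms through the Gauss and Weingarten formulas (\ref{2.2}), (\ref{2.3}) and (\ref{2.4}), giving $g(\sigma(X,T_1Y),\omega Z)$ and $-g(\sigma(X,T_2Z),\omega Y)$. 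What remains is a purely tangential piece and a purely normal piece, so that
\begin{align*}
g(\nabla_XY,Z)&=g(\nabla_X T_1Y,T_2Z)+g(\sigma(X,T_1Y),\omega Z)-g(\sigma(X,T_2Z),\omega Y)\\
&\quad+g(\nabla^\perp_X\omega Y,\omega Z).
\end{align*}

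The main obstacle is precisely that the tangential term $g(\nabla_X T_1Y,T_2Z)$ and the normal term $g(\nabla^\perp_X\omega Y,\omega Z)$ are not yet expressed through $\sigma$, and it is here that the slant functions must be introduced. For the tangential term I would use $(\cos^2\theta_1)\,g(\nabla_XY,Z)=g(\bar{\nabla}_X T^2Y,Z)$ — the derivative of the scalar $\cos^2\theta_1$ is killed by $g(Y,Z)=0$ — and then apply the same parallel/self-adjoint identity once more to trade $g(\nabla_X T_1Y,T_2Z)$ for $(\cos^2\theta_1)\,g(\nabla_XY,Z)$ together with the terms $g(\sigma(X,T_1Y),\omega Z)$ and $g(\sigma(X,Z),\omega T_1Y)$. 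For the normal term I would write $\omega Y=FY-T_1Y$, differentiate, push $F$ through the connection, and invoke (\ref{2.9}) in the form $B\omega Z=(\sin^2\theta_2)Z$ and $C\omega Z=-\omega T_2Z$; this is what manufactures the decisive contribution $(\sin^2\theta_2)\,g(\nabla_XY,Z)$, alongside $-g(\sigma(X,Y),\omega T_2Z)$ and $-g(\sigma(X,T_1Y),\omega Z)$.

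Feeding these two conversions back into the displayed relation, the intermediate $\sigma(X,T_1Y)$ contributions cancel and recombine, the two slant-function pieces merge (using $\cos^2\theta_1-1=-\sin^2\theta_1$ and transposing $(\sin^2\theta_2)\,g(\nabla_XY,Z)$) into the coefficient $\sin^2\theta_2-\sin^2\theta_1$ on the left, and the four surviving second fundamental form terms are exactly those on the right of (\ref{3.4}). Finally, (\ref{3.5}) is obtained by rerunning the whole computation with the roles of the distributions swapped, i.e. replacing $(D_1,\theta_1,T_1)$ by $(D_2,\theta_2,T_2)$; since $\sigma$ is symmetric, relabelling $X,Y\in D_2$ as $Z,W$ and $Z\in D_1$ as $X$ reproduces (\ref{3.5}) verbatim, with the coefficient flipping to $\sin^2\theta_1-\sin^2\theta_2$.
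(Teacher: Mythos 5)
Your proof is correct and takes essentially the same route as the paper's: both start from $g(\nabla_XY,Z)=g(\bar{\nabla}_XFY,FZ)$ using parallelism and self-adjointness of $F$, expand via $FY=T_1Y+\omega Y$ and $FZ=T_2Z+\omega Z$, produce the $\cos^2\theta_1$ coefficient from $T_1^2=(\cos^2\theta_1)I$, produce the $\sin^2\theta_2$ coefficient from (\ref{2.9}), convert the remaining terms to shape-operator/second-fundamental-form terms via Gauss--Weingarten and (\ref{2.4}), collect, and get part (ii) by interchanging the roles of the two distributions. The only difference is cosmetic: the paper moves the derivative onto $\omega Z$ by metric compatibility before invoking (\ref{2.9}), whereas you substitute $\omega Y = FY - T_1Y$ inside the derivative and apply (\ref{2.9}) after moving $F$ across the metric.
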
 
\begin{proof}
	For $ X, Y \in D_1$ and $ Z \in D_2$, we have 
	\begin{align*}
		g(\nabla_XY,Z) = g(\bar{\nabla}_XY,Z) = g(F\bar{\nabla}_XY,FZ).
	\end{align*}
Using the locally product structure and (\ref{2.5}), we have
\begin{eqnarray*}
	g(\nabla_XY,Z) &=& g(\bar{\nabla}_XT_1Y,FZ) + g(\bar{\nabla}_X\omega Y,T_2Z) + g(\bar{\nabla}_X\omega Y,\omega Z).\\ &=&  g(\bar{\nabla}_XT^2_1Y,Z) + g(\bar{\nabla}_X\omega T_1Y,Z) - g(A_{\omega Y}X,T_2Z) \\ && - g(\bar{\nabla}_X\omega Z, \omega Y).
\end{eqnarray*}
Again using (\ref{2.5}) and (\ref{2.7}), we obtain
\begin{eqnarray*}
   g(\nabla_XY,Z) &=& \cos^2\theta_1 g(\bar{\nabla}_XY,Z) - \sin 2(\theta_1)X(\theta_1)g(Y,Z) - g(A_{\omega T_1Y}X,Z)\\ && - g(A_{\omega Y}X,T_2Z) - g(\bar{\nabla}_X\omega Z,FY) + g(\bar{\nabla}_X\omega Z,T_1Y).
\end{eqnarray*}
By using the orthogonality of two distributions and the symmetry of shape operator, the above equation reduces to
\begin{eqnarray*}
	\sin^2\theta_1g(\nabla_XY,Z) &=& - g(\sigma(X,Z),\omega T_1Y) - g(\sigma(X,T_2Z),\omega Y) \\ && - g(\bar{\nabla}_XB\omega Z,Y)  - g(\bar{\nabla}_XC\omega Z,Y) - g(A_{\omega Z}X,T_1Y).	
\end{eqnarray*}
Thus, from (\ref{2.9}), we obtain
\begin{eqnarray*}
	\sin^2\theta_1g(\nabla_XY,Z) &=& - g(\sigma(X,Z),\omega T_1Y) - g(\sigma(X,T_2Z),\omega Y) \\ && - \sin^2\theta_2 g(\bar{\nabla}_XZ,Y) - \sin 2(\theta_2) X(\theta_2)g(Y,Z) \\ && + g(\bar{\nabla}_X\omega T_2Z,Y) -  g(A_{\omega Z}X,T_1Y).
\end{eqnarray*}
Using (\ref{2.3}) and the orthogonality of vector fields, we have
\begin{eqnarray*}
	\sin^2\theta_1g(\nabla_XY,Z) &=& - g(\sigma(X,Z),\omega T_1Y) - g(\sigma(X,T_2Z),\omega Y) \\ && + \sin^2\theta_2 g(\bar{\nabla}_XY,Z) - g(A_{\omega T_2Z}X,Y) \\&& - g(A_{\omega Z}T_1Y,X).
\end{eqnarray*}
Now, part $(i)$ of the lemma follows from the above Equation by using (\ref{2.4}). In the similar fashion, we can prove part $(ii)$.
\end{proof}
The following corollary is the immediate consequence of the Lemma 1$(i)$
\begin{corrolory}
	Let $M$ be a pointwise semi-slant submanifold of a locally product Riemannian manifold $\bar{M}$. Then,
	\begin{align*}
		\sin^2\theta g(\nabla_XY,Z) = g(\sigma(X,Y),\omega TZ) + g(\sigma(X,FY),\omega Z),
	\end{align*}
	for any $X, Y \in D_1$ and $Z \in D_2$.
\end{corrolory}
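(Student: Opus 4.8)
The plan is to read the corollary off directly from part $(i)$ of the preceding lemma by specializing the general bi-slant data to the semi-slant situation. Recall that a pointwise semi-slant submanifold is precisely a pointwise bi-slant submanifold in which one of the two distributions, say $D_1$, is invariant; that is, $\theta_1 \equiv 0$, while $D_2$ carries an arbitrary pointwise slant function which I will write as $\theta_2 =: \theta$. With this identification the coefficient on the left-hand side of (\ref{3.4}) becomes $\sin^2\theta_2 - \sin^2\theta_1 = \sin^2\theta$, which already reproduces the factor appearing in the conclusion. The correct sign is essential here: taking $D_1$ invariant (rather than $D_2$) is exactly what makes this coefficient the \emph{positive} $\sin^2\theta$, so one must be careful not to swap the roles of $D_1$ and $D_2$.

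First I would record the two structural facts forced by the invariance of $D_1$. Since $FY \in D_1 \subset TM$ for every $Y \in D_1$, the normal component vanishes, so from (\ref{3.3}) we get $\omega Y = 0$ and $T_1 Y = FY$; moreover $FY$ again lies in the invariant distribution $D_1$, whence $\omega T_1 Y = \omega(FY) = 0$. On the slant side, (\ref{3.1}) gives $TZ \in D_2$ for $Z \in D_2$, so $T_2 Z = TZ$ and consequently $\omega T_2 Z = \omega TZ$. These identities are the only ingredients needed to collapse the four terms on the right-hand side of (\ref{3.4}).

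Next I would substitute these into (\ref{3.4}) term by term. The first term $g(\sigma(X,Z),\omega T_1 Y)$ and the second term $g(\sigma(X,T_2 Z),\omega Y)$ vanish because $\omega T_1 Y = 0$ and $\omega Y = 0$, respectively. The surviving third and fourth terms simplify to $g(\sigma(X,Y),\omega T_2 Z) = g(\sigma(X,Y),\omega TZ)$ and $g(\sigma(X,T_1 Y),\omega Z) = g(\sigma(X,FY),\omega Z)$. Combining these with the computed coefficient $\sin^2\theta$ yields exactly the asserted identity. I do not expect any genuine obstacle: the entire content is the careful bookkeeping of which tangential and normal components survive the invariance hypothesis on $D_1$, so the only place to exercise caution is the consistent assignment $\theta_1 = 0$, $\theta_2 = \theta$ when matching the sign of $\sin^2\theta_2 - \sin^2\theta_1$ to the positive $\sin^2\theta$ in the statement.
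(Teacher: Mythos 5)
Your proposal is correct and follows exactly the paper's own route: specializing Lemma 3.2$(i)$ with $\theta_1=0$, $\theta_2=\theta$, observing that the first two terms of (\ref{3.4}) vanish, and identifying $T_2Z = TZ$, $T_1Y = FY$. In fact your write-up is more careful than the paper's, since you justify explicitly \emph{why} those terms vanish ($\omega Y = 0$ and $\omega T_1Y = 0$ from the invariance of $D_1$), which the paper asserts without comment.
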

	\begin{proof}
		If we put $\theta_1 = 0$ and $\theta_2 = \theta$, a slant function, then the submanifold $M$ of locally product Riemannian manifold $\bar{M}$ becomes pointwise semi-slant submanifold. In this case, the first two terms in the right hand size of (\ref{3.4}) vanish identically. Thus the relation (\ref{3.4}) reduces to 
		\begin{align*}
			\sin^2\theta g(\nabla_XY,Z) = g(\sigma(X,Y),\omega TZ) + g(\sigma(X,FY),\omega Z).
		\end{align*}
	The same result has been proved in \cite{JGP}.
	\end{proof}
\section{Warped product pointwise bi-slant submanifolds of locally product Riemannian manifold}
Let $(M_1, g_1)$ and $(M_2, g_2)$ be two Riemannian manifolds and $f > 0$, be a positive differentiable function on $M_1$. Consider the product	manifold $M_1 \times M_2$ with its canonical projections $\pi : M_1 \times M_2 \rightarrow M_1$ and $\rho : M_1 \times M_2 \rightarrow M_2$. The warped product $M = M_1 \times_f M_2$ is the product manifold $M_1 \times M_2$ equipped with the Riemannian metric $g$ such that\\
\begin{eqnarray*}
	g(X,Y) = g_1(\pi_\ast(X),\pi_\ast(Y)) + (f \circ \pi)^2g_2(\rho_\ast(X),\rho_\ast(Y))
\end{eqnarray*}
for any tangent vector $X, Y \in TM$, where $\ast$ is the symbol for the tangent maps. It was proved in \cite{Neill} that for any $ X \in TM_1$ and $Z \in TM_2$, the following holds
\begin{eqnarray}\label{4.1}
	\nabla_XZ = \nabla_ZX = (Xln f)Z
\end{eqnarray}
where $\nabla$ denotes the Levi-Civita connection of $g$ on $M$. A warped product manifold $ M = M_1 \times_f M_2$ is said to be trivial if the warping function $f$ is constant. If $M = M_1 \times_f M_2$ is a warped product manifold then $M_1$ is totally geodesic and $M_2$ is a totally umbilical (see \cite{Neill,6}).\\
\begin{lemma}
	Let $M_T \times_f M_\theta$ be a warped product pointwise bi-slant submanifold of a locally product Riemannian manifold $\bar{M}$ such that $M_T$ and $M_\theta$ are pointwise slant submanifolds with slant functions $\theta_1$ and $\theta_2$, respectively of $\bar{M}$. Then we have the following
	\begin{align}\label{4.2}
		g(\sigma(X,W), \omega T_2Z) + g(\sigma(X,T_2Z), \omega W) = -(\sin 2\theta_2)X(\theta_2)g(Z,W)
	\end{align}
for any $X \in TM_T$ and $Z,W \in TM_\theta$.
\begin{proof}
	For any $X \in TM_T$ and $ Z, W \in TM_\theta$, we have
	\begin{align}\label{4.3}
		g(\bar{\nabla}_XZ,W) = g(\nabla_XZ,W) = X(ln f)g(Z,W).
	\end{align}
On the other hand, we also have 
\begin{align*}
		g(\bar{\nabla}_XZ,W) = g(F\bar{\nabla}_XZ,FW) = g(\bar{\nabla}_XFZ,FW). 
\end{align*}
Now for any  $X \in TM_T$ and $ Z, W \in TM_\theta$. Using (\ref{2.5}), we obtain
\begin{align*}
		g(\bar{\nabla}_XZ,W) = 	g(\bar{\nabla}_XT_2Z,T_2W) + g(\bar{\nabla}_XT_2Z,\omega W) + g(\bar{\nabla}_X\omega Z,FW).
\end{align*}
Then from (\ref{2.1}), (\ref{2.2}), (\ref{4.1}) and the locally product Riemannian structure, we derive
\begin{small}
	\begin{eqnarray*}
		g(\bar{\nabla}_XZ,W) &=& \cos^2\theta_2 X(ln f)g(Z,W) + g(\sigma(X,T_2Z),\omega W) + g(\bar{\nabla}_XF\omega Z,W)\\ &=& \cos^2\theta_2 X(ln f)g(Z,W) + g(\sigma(X,T_2Z),\omega W) + g(\bar{\nabla}_XB\omega Z,W) \\ &&+ g(\bar{\nabla}_XC\omega Z, W). 
	\end{eqnarray*}
\end{small}
Using (\ref{2.9}), we find 
\begin{eqnarray}\label{4.4}
		g(\bar{\nabla}_XZ,W) &=& \cos^2\theta_2 X(ln f)g(Z,W) + g(\sigma(X,T_2Z),\omega W) \\ \nonumber &&	+ \sin^2\theta_2 g(\bar{\nabla}_XZ,W) + \sin2\theta_2 X(\theta_2)g(Z,W) \\ \nonumber && - g(\bar{\nabla}_X\omega T_2Z,W). 
\end{eqnarray}
Thus the lemma follows from (\ref{4.3}) and (\ref{4.4}) by Using (\ref{2.3}) and (\ref{4.1}).
\end{proof}
\end{lemma}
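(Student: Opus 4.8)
The plan is to evaluate $g(\bar{\nabla}_X Z, W)$ in two independent ways and then compare. On the first side, since $X$ is tangent to the base $M_T$ while $Z,W$ are tangent to the fibre $M_\theta$, the O'Neill relation (\ref{4.1}) gives $\nabla_X Z = (X\ln f)Z$; and because $\sigma(X,Z)$ is normal whereas $W$ is tangent, the Gauss formula (\ref{2.2}) immediately collapses $g(\bar{\nabla}_X Z, W)$ to $(X\ln f)\,g(Z,W)$. This is the inexpensive side, and it is this value that will ultimately be substituted to force a cancellation.

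On the second side I would exploit the almost product structure. Using $F^2=I$ together with the compatibility (\ref{2.1}), one rewrites $g(\bar{\nabla}_X Z, W) = g(F\bar{\nabla}_X Z, FW)$, and the hypothesis that $\bar{M}$ is locally product, i.e.\ $\bar{\nabla}F=0$, allows $F$ to be pulled through the connection: $F\bar{\nabla}_X Z = \bar{\nabla}_X FZ$. Decomposing $FZ = T_2 Z + \omega Z$ and $FW = T_2 W + \omega W$ via (\ref{2.5}) and (\ref{3.3}) then splits the inner product into three groups of terms. The term $g(\bar{\nabla}_X T_2 Z, T_2 W)$ is treated exactly as on the first side: Gauss, then (\ref{4.1}) applied to $T_2 Z \in TM_\theta$, then the metric identity (\ref{2.8}) turn it into $\cos^2\theta_2\,X(\ln f)\,g(Z,W)$. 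The term $g(\bar{\nabla}_X T_2 Z, \omega W)$ already equals $g(\sigma(X,T_2 Z),\omega W)$, one of the two expressions in the target, since only the normal part survives against $\omega W$.

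The step I expect to be the crux is the remaining piece $g(\bar{\nabla}_X \omega Z, FW)$. Here $\omega Z$ is normal, so I would invoke $F^2=I$ and $\bar{\nabla}F=0$ a second time to recast it as $g(\bar{\nabla}_X F\omega Z, W)$, and then apply the decomposition identities (\ref{2.9}), namely $B\omega Z = (\sin^2\theta_2)Z$ and $C\omega Z = -\omega T_2 Z$, so that $F\omega Z = (\sin^2\theta_2)Z - \omega T_2 Z$. Differentiating the first summand produces both a $\sin 2\theta_2\,X(\theta_2)$ contribution, coming from $X(\sin^2\theta_2)$, and a $\sin^2\theta_2\,g(\bar{\nabla}_X Z, W)$ contribution; the second summand, being normal, yields $g(\sigma(X,W),\omega T_2 Z)$ through the Weingarten formula (\ref{2.3}) and the shape-operator relation (\ref{2.4}). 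The only real care here is bookkeeping of signs and of which tangential or normal component survives each pairing.

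Finally, I would assemble the second computation into one identity, move the $\sin^2\theta_2\,g(\bar{\nabla}_X Z, W)$ term to the left so that its coefficient becomes $1-\sin^2\theta_2=\cos^2\theta_2$, and then substitute the first-side value $g(\bar{\nabla}_X Z, W) = (X\ln f)\,g(Z,W)$. At this point the two $\cos^2\theta_2\,X(\ln f)\,g(Z,W)$ contributions cancel exactly, leaving precisely $g(\sigma(X,W),\omega T_2 Z) + g(\sigma(X,T_2 Z),\omega W) = -(\sin 2\theta_2)X(\theta_2)g(Z,W)$. The genuine subtlety of the argument is that the warping-function terms must vanish identically, which is exactly why the factor $\cos^2\theta_2$ has to emerge on both sides in matched form.
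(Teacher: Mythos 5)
Your proposal is correct and takes essentially the same route as the paper's own proof: both compute $g(\bar{\nabla}_XZ,W)$ once via the warped-product relation $\nabla_XZ=(X\ln f)Z$ and once via $g(\bar{\nabla}_XFZ,FW)$, split the latter into the same three terms, and handle the crux term $g(\bar{\nabla}_XF\omega Z,W)$ using $B\omega Z=(\sin^2\theta_2)Z$, $C\omega Z=-\omega T_2Z$ together with the Weingarten formula and the shape-operator identity. The only difference is cosmetic bookkeeping at the end (you move the $\sin^2\theta_2\,g(\bar{\nabla}_XZ,W)$ term to the left to form $\cos^2\theta_2$ before substituting, while the paper substitutes both occurrences directly), and the sign analysis you flag as the delicate point is handled correctly.
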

\begin{lemma}
	Let $M_T \times_f M_\theta$ be a warped product pointwise bi-slant submanifold of a locally product Riemannian manifold $\bar{M}$ such that $M_T$ and $M_\theta$ are pointwise slant submanifolds with slant functions $\theta_1$ and $\theta_2$, respectively of $\bar{M}$. Then we have the following
	\begin{align}\label{4.6}
		g(\sigma(X,Z), \omega W) + g(\sigma(X,W), \omega Z) = -2(\tan \theta_2)X(\theta_2)g(T_2Z,W)
	\end{align}
	for any $X \in TM_T$ and $Z,W \in TM_\theta$.
\end{lemma}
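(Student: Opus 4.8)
The plan is to deduce this identity directly from the preceding lemma, equation (\ref{4.2}), by a judicious substitution, rather than repeating the Gauss--Weingarten computation from scratch. The right-hand side of (\ref{4.6}) carries a factor $g(T_2Z,W)$ together with the coefficient $2\tan\theta_2$, whereas (\ref{4.2}) has $g(Z,W)$ and $\sin 2\theta_2$; the shape of these two expressions strongly suggests replacing $Z$ by $T_2Z$ in (\ref{4.2}) and then using $T_2^2=(\cos^2\theta_2)I$ to collapse the squared slant operator into the scalar $\cos^2\theta_2$.

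Concretely, first I would observe that for $Z\in TM_\theta$ we have $T_2Z\in TM_\theta$ by (\ref{3.1}), so $T_2Z$ is an admissible vector field to insert into (\ref{4.2}). Substituting $Z\mapsto T_2Z$ there gives
\begin{align*}
 g(\sigma(X,W),\omega T_2^2Z) + g(\sigma(X,T_2^2Z),\omega W) = -(\sin 2\theta_2)X(\theta_2)\,g(T_2Z,W).
\end{align*}
Next I would invoke the pointwise slant relation (\ref{2.7}) restricted to the distribution $D_2=TM_\theta$, namely $T_2^2Z=(\cos^2\theta_2)Z$, together with the linearity of $\omega$ and the bilinearity of $\sigma$, to pull the scalar $\cos^2\theta_2$ out of both terms on the left. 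This reduces the displayed equation to
\begin{align*}
 \cos^2\theta_2\,\big[g(\sigma(X,W),\omega Z) + g(\sigma(X,Z),\omega W)\big] = -(\sin 2\theta_2)X(\theta_2)\,g(T_2Z,W).
\end{align*}
Finally, dividing through by $\cos^2\theta_2$ and simplifying via $\sin 2\theta_2/\cos^2\theta_2 = 2\sin\theta_2\cos\theta_2/\cos^2\theta_2 = 2\tan\theta_2$ produces exactly (\ref{4.6}).

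The computation itself is very short, so the only genuine point to watch is the division by $\cos^2\theta_2$: this step is legitimate precisely because $M_\theta$ is a proper pointwise slant factor, so that $\theta_2\neq\frac{\pi}{2}$ and $\cos^2\theta_2$ is nowhere zero on $M_\theta$. If one wished to avoid any pointwise vanishing altogether, one could instead retain the identity in its factored form and note that both sides vanish wherever $\theta_2=\frac{\pi}{2}$. I would also flag that the substitution $Z\mapsto T_2Z$ must be verified to stay inside $TM_\theta$, which is exactly the content of (\ref{3.1}); apart from this, no geometric input beyond the preceding lemma and the slant identity (\ref{2.7}) is required.
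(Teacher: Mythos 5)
Your proposal is correct and coincides with the paper's own argument: the paper's proof is precisely the one-line instruction to interchange $Z$ with $T_2Z$ in (\ref{4.2}) and then apply (\ref{2.7}). Your additional remarks on the tensoriality of $\sigma$ and $\omega$, and on the division by $\cos^2\theta_2$ requiring $\theta_2\neq\frac{\pi}{2}$, only make explicit what the paper leaves implicit.
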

\begin{proof}
	The proof of this lemma follows by Interchanging $Z$ by $T_2Z$ for any $Z \in TM_2$ in (\ref{4.2}) and then by using (\ref{2.7}).
\end{proof}

\begin{lemma}	
	Let $M_T \times_f M_\theta$ be a warped product pointwise bi-slant submanifold of a locally product Riemannian manifold $\bar{M}$ such that $M_T$ and $M_\theta$ are pointwise slant submanifolds with slant functions $\theta_1$ and $\theta_2$, respectively of $\bar{M}$. Then 
	\begin{align}\label{4.6}
	(i) \hspace{0.5 cm}	g(\sigma(X,Z),\omega W) = g(\sigma(X,W),\omega Z),
	\end{align}
\begin{align}\label{4.7}
	(ii) \hspace{0.5cm}g(\sigma(X,Z),\omega Y) = - g(\sigma(X,Y),\omega Z),
\end{align}
for any $X \in TM_T$ and $Z, W \in TM_\theta$.
\end{lemma}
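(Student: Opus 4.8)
The plan is to derive both identities from one short shape-operator computation, in the same spirit as the proof of the preceding lemma but arranged so that the warped-product terms stay transparent. Fix $X\in TM_T$ and $Z,W\in TM_\theta$. Using \eqref{2.4} and the Weingarten formula \eqref{2.3}, and observing that the normal-connection term is killed when paired against the tangent vector $Z$, I would first reduce to
\[
g(\sigma(X,Z),\omega W)=g(A_{\omega W}X,Z)=-g(\bar\nabla_X\omega W,Z),
\]
so that everything rests on evaluating $g(\bar\nabla_X\omega W,Z)$.

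Next I would split $\omega W=FW-TW$, where $TW=T_2W\in TM_\theta$ by \eqref{3.1}. On the $FW$ piece I use that $\bar M$ is locally product, so $\bar\nabla_XFW=F\bar\nabla_XW$, together with \eqref{2.1} and $F^2=I$ to transfer $F$ across the metric, giving $g(\bar\nabla_XFW,Z)=g(\bar\nabla_XW,FZ)$. Expanding $FZ=T_2Z+\omega Z$ and applying the Gauss formula \eqref{2.2} with the warped-product identity \eqref{4.1}, the tangential part yields $(X\ln f)\,g(W,T_2Z)$ and the normal part yields $g(\sigma(X,W),\omega Z)$; the $TW$ piece contributes only $(X\ln f)\,g(T_2W,Z)$ via \eqref{4.1}. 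The decisive point is that the two warping terms are equal, since $g(W,T_2Z)=g(T_2W,Z)$ by the symmetry \eqref{2.6} of $T$, so upon subtracting they cancel and one is left with the asserted relation in (i) between $g(\sigma(X,Z),\omega W)$ and $g(\sigma(X,W),\omega Z)$.

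For (ii) I would run the very same computation with the second vector taken in $TM_T$; writing it as $Y\in TM_T$, one now has $TY=T_1Y\in TM_T$. The simplification is cleaner here: because $M_T$ is totally geodesic in the warped product, $\nabla_XY$ and $\nabla_XT_1Y$ stay in $TM_T$, and since $TM_T\perp TM_\theta$ every warping contribution vanishes outright rather than merely cancelling. What survives is exactly $g(\sigma(X,Z),\omega Y)=-g(\sigma(X,Y),\omega Z)$.

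The step I expect to be the main obstacle is not any single manipulation but the sign-and-projection bookkeeping taken as a whole: one must be consistent about the minus sign coming from Weingarten, keep track of which pairings vanish because a tangent field meets a normal one, and verify carefully that in (i) the two $(X\ln f)$ terms genuinely coincide (hence cancel) while in (ii) they vanish separately by orthogonality. Once this accounting is done, both identities fall out of the same two-line calculation; they are precisely the relations one wants to feed, together with the two preceding lemmas, into the subsequent analysis of $X(\theta_2)$.
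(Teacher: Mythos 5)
Your argument for part (ii) is sound: with $Y\in TM_T$, total geodesy of $M_T$ kills both tangential pairings, and the Weingarten minus sign is exactly the minus sign in (ii). The gap is in part (i), and it is not mere bookkeeping. Follow your own steps to the end: from $g(\sigma(X,Z),\omega W)=-g(\bar{\nabla}_X\omega W,Z)$ and
\begin{align*}
g(\bar{\nabla}_X\omega W,Z)=(X\ln f)\,g(W,T_2Z)+g(\sigma(X,W),\omega Z)-(X\ln f)\,g(T_2W,Z),
\end{align*}
the cancellation of the two warping terms by (2.6) leaves $g(\sigma(X,Z),\omega W)=-g(\sigma(X,W),\omega Z)$, which is the \emph{negative} of (i), not (i): the minus sign coming from the Weingarten formula survives to the very end. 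This cannot be patched, because a relation antisymmetric in $Z,W$ is not the symmetric one the lemma asserts; worse, if your antisymmetric identity were true it would contradict the paper itself, since combined with Lemma 4.2 it forces $\tan\theta_2\,X(\theta_2)\,g(T_2Z,W)=0$ for all $Z,W$, hence $X(\theta_2)=0$ on any proper warped product, collapsing the whole pointwise setting.

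The reason your computation produces a false identity rather than a sign-flipped true one is the step $g(\bar{\nabla}_X T_2W,Z)=(X\ln f)\,g(T_2W,Z)$. Formula (4.1) applies to lifts of vector fields on the factors, and $T_2W$ is not such a lift: the operator $T_2$ varies along $M_T$ precisely when $\theta_2$ does. The correct statement is $g(\bar{\nabla}_X T_2W,Z)=(X\ln f)\,g(T_2W,Z)+g([X,T_2W],Z)$, and the dropped bracket term is exactly the quantity that Lemma 4.2 evaluates as $-2\tan\theta_2\,X(\theta_2)\,g(T_2Z,W)$; it does not vanish for a genuinely pointwise slant function, so your two warping terms never cancel cleanly. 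The paper avoids this trap by differentiating in the fiber direction instead: it writes $g(\sigma(X,Z),\omega W)=g(\bar{\nabla}_Z X,\omega W)$, expands to
\begin{align*}
g(\sigma(X,Z),\omega W)=(T_1X\ln f)\,g(Z,W)-g(\sigma(Z,W),\omega X)-(X\ln f)\,g(Z,T_2W),
\end{align*}
where every derivative formula used is insensitive to whether $T_1X$ is a lift (the pairing $g(\nabla_Z U,W)=(U\ln f)\,g(Z,W)$ holds for arbitrary horizontal $U$), and then interchanges $Z$ and $W$ and subtracts, so that the uncontrolled term $g(\sigma(Z,W),\omega X)$ and the warping terms drop out by the symmetry (2.6) of $T$. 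If you want to keep your shape-operator framing for (i), you must differentiate along $TM_\theta$, not along $TM_T$: derivatives along $M_T$ inevitably pick up $X(\theta_2)$ terms, which is precisely the content of Lemmas 4.1 and 4.2.
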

\begin{proof}
	For any $ X \in TM_T$ and $ Z, W \in TM_\theta$, we have
	\begin{eqnarray*}
		g(\sigma(X,Z),\omega W) &=& g(\bar{\nabla}_ZX,\omega W) \\ &=& g(\bar{\nabla}_ZX,FW) - g(\bar{\nabla}_ZX,T_2W).
	\end{eqnarray*}
Using (\ref{2.1}),(\ref{2.5}) and (\ref{4.1}), we obtain
\begin{align*}
		g(\sigma(X,Z),\omega W) = g(\bar{\nabla}_ZT_1X,W) + g(\bar{\nabla}_Z\omega X,W) - X(ln f)g(Z,T_2W).
\end{align*}
On simplification and using (\ref{2.3}), (\ref{2.4}) and (\ref{4.1}), we derive
\begin{align}\label{4.8}
\nonumber	g(\sigma(X,Z),\omega W) = T_1X(ln f)g(Z,W) - g(\sigma(Z,W),\omega X) - X(ln f)g(Z,T_2W).\\
\end{align}
Then from polarization, we get
\begin{align}\label{4.9}
\nonumber	g(\sigma(X,W),\omega Z) = T_1X(ln f)g(Z,W) - g(\sigma(Z,W),\omega X) - X(ln f)g(T_2Z,W).\\
\end{align}
Subtracting (\ref{4.9}) from (\ref{4.8}) and using (\ref{2.6}), we obtain
\begin{align*}
	g(\sigma(X,Z),\omega W) - g(\sigma(X,W),\omega Z) = 0.
\end{align*}
Hence, the proof follows from the above relation.\\
For part $(ii)$, the proof follows same as part $(i)$.
\end{proof}

\begin{theorem}	
	Let $M = M_T \times_f M_\theta$ be a warped product pointwise bi-slant submanifold of a locally product Riemannian manifold $\bar{M}$ such that $M_T$ and $M_\theta$ are pointwise slant submanifolds with distinct slant functions $\theta_1$ and $\theta_2$, respectively of $\bar{M}$. Then we have
	\begin{align}\label{4.10}
	\nonumber	g(A_{\omega T_1X}W + A_{\omega X}T_2W,Z) + g(A_{\omega T_2W}X + A_{\omega W}T_1X,Z)\\ =  (\sin^2\theta_2 - \sin^2\theta_1)X(ln f)g(Z,W).
	\end{align}
	for any $X, Y\in TM_T$ and $Z, W \in TM_\theta$.
\end{theorem}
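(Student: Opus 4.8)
The plan is to recognize (4.10) as the warped-product specialization of part $(ii)$ of Lemma 3.2, so that essentially no new geometric computation is required: it suffices to combine (3.5) with the warped-product identity (4.1) and the shape-operator duality (2.4). Under the identification $TM_T = D_1$ (slant function $\theta_1$) and $TM_\theta = D_2$ (slant function $\theta_2$), the hypotheses of Lemma 3.2$(ii)$ are met verbatim for $X \in TM_T$ and $Z, W \in TM_\theta$, so (3.5) is available as written.

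First I would rewrite the left-hand side of (3.5), namely $(\sin^2\theta_1 - \sin^2\theta_2)\,g(\nabla_Z W, X)$. Since $Z, W \in TM_\theta$ and $X \in TM_T$ are orthogonal, metric compatibility gives $g(\nabla_Z W, X) = -g(W, \nabla_Z X)$, and then (4.1) yields $\nabla_Z X = (X \ln f)\,Z$, so that
\[
g(\nabla_Z W, X) = -(X \ln f)\, g(Z, W).
\]
Multiplying by $(\sin^2\theta_1 - \sin^2\theta_2)$ converts the left side of (3.5) into $(\sin^2\theta_2 - \sin^2\theta_1)\,X(\ln f)\,g(Z, W)$, which is exactly the right-hand side of (4.10).

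Next I would recast the four second-fundamental-form terms on the right of (3.5) in terms of the shape operator, using (2.4) together with the symmetry of $\sigma$. Concretely, $g(\sigma(X,Z),\omega T_2 W) = g(A_{\omega T_2 W}X, Z)$, $\ g(\sigma(Z,T_1 X),\omega W) = g(A_{\omega W}T_1 X, Z)$, $\ g(\sigma(Z,W),\omega T_1 X) = g(A_{\omega T_1 X}W, Z)$ (using the self-adjointness of $A$), and $g(\sigma(Z,T_2 W),\omega X) = g(A_{\omega X}T_2 W, Z)$. Collecting the first with the third term and the second with the fourth reproduces precisely $g(A_{\omega T_1 X}W + A_{\omega X}T_2 W, Z) + g(A_{\omega T_2 W}X + A_{\omega W}T_1 X, Z)$, the left-hand side of (4.10), and the identity follows.

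The only delicate point — and the step I would watch most carefully — is the bookkeeping in the last paragraph: one must invoke the symmetry $\sigma(Z,W) = \sigma(W,Z)$ and the self-adjointness $g(A_N U, V) = g(A_N V, U)$ at the correct places so that every term carries $Z$ in its second slot, matching the stated grouping. Once the four terms are aligned in this way the identification is immediate, and no input beyond Lemma 3.2$(ii)$, (4.1) and (2.4) is needed.
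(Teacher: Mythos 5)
Your proof is correct, and it takes a genuinely different --- and more economical --- route than the paper. The paper proves the theorem by a direct computation from scratch: starting from $g(\bar{\nabla}_Z X, W) = X(\ln f)\,g(Z,W)$, it expands $g(\bar{\nabla}_Z FX, FW)$ via the locally product structure, (2.5), (2.9), (2.2)--(2.3), (4.1) and the symmetry of the shape operator, in effect re-deriving the content of Lemma 3.2$(ii)$ with the warped-product data substituted along the way. You instead observe that the theorem is literally the warped-product specialization of Lemma 3.2$(ii)$: the identification $D_1 = TM_T$, $D_2 = TM_\theta$ makes (3.5) applicable verbatim; metric compatibility plus orthogonality plus (4.1) give $g(\nabla_Z W, X) = -X(\ln f)\,g(Z,W)$, which turns the left side of (3.5) into exactly the right side of (4.10) after the sign flip; and (2.4) together with the self-adjointness of $A$ (both consequences of the symmetry of $\sigma$) convert the four $\sigma$-terms of (3.5) into precisely the four shape-operator terms of (4.10) --- your term-by-term bookkeeping here is accurate. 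What your route buys is brevity and the structural insight that Theorem 4.4 is just Lemma 3.2$(ii)$ evaluated on a warped product, a link the paper itself implicitly exploits when it runs the converse direction of the characterization in Theorem 5.2 through Lemma 3.2; what the paper's route buys is a self-contained derivation that does not lean on the earlier lemma. Both are valid, and yours is arguably the cleaner presentation.
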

\begin{proof}
	For any $X, Y\in TM_T$ and $Z, W \in TM_\theta$, we have
	\begin{align}\label{4.11}
		g(\bar{\nabla}_ZX,W) = g(\nabla_ZX,W) = X(ln f)g(Z,W).
	\end{align}
On the other hand, for $X, Y\in TM_T$ and $Z, W \in TM_\theta$, we have
\begin{align*}
	g(\bar{\nabla}_ZX,W) = g(F\bar{\nabla}_ZX,FW) = g(\bar{\nabla}_ZFX,FW).
\end{align*}
Therefore, by using (\ref{2.5}), we get
\begin{align*}
	g(\bar{\nabla}_ZX,W) = g(\bar{\nabla}_ZT_1X,FW) + g(\bar{\nabla}_Z\omega X,T_2W) + g(\bar{\nabla}_Z\omega X,\omega W). 
\end{align*}
Using (\ref{2.1}), (\ref{2.3}) and definition of locally product Riemannian manifold, we obtain
\begin{align*}
	g(\bar{\nabla}_ZX,W) = g(\bar{\nabla}_ZFT_1X,W) - g(A_{\omega X}Z,T_2W) - g(\bar{\nabla}_Z \omega W, \omega X).
\end{align*}
From (\ref{2.5}) and symmetry of shape operator, we derive
\begin{eqnarray*}
g(\bar{\nabla}_ZX,W) &=& g(\bar{\nabla}_ZT_1^2X,W) + g(\bar{\nabla}_Z\omega T_1X,W) - g(A_{\omega X}T_2W,Z) \\ && - g(F\bar{\nabla}_Z\omega W,X) + g(\bar{\nabla}_Z\omega W,T_1X) \\ &=& \cos^2\theta_1g(\bar{\nabla}_ZX,W) - \sin 2\theta_1Z(\theta_1)g(X,W) - g(A_{\omega T_1X}Z,W) \\ && - g(A_{\omega X}T_2W,Z) - g(\bar{\nabla}_ZF\omega W,X) - g(A_{\omega W}Z,T_1X).	
\end{eqnarray*}
Using (\ref{2.2}), (\ref{2.5}), (\ref{4.1}), (\ref{4.11}) and the orthogonality of vector fields and symmetry of shape operator, we get
\begin{eqnarray*}
	\sin^2\theta_1 X(ln f)g(Z,W) &=& -g(A_{\omega T_1X}W + A_{\omega X}T_2W,Z) - g(\bar{\nabla}_ZB\omega W,X) \\ && - g(\bar{\nabla}_ZC\omega W,X) - g(A_{\omega W}T_1X,Z).
\end{eqnarray*}
Using (\ref{2.9}), we arrive at
\begin{eqnarray*}
	\sin^2\theta_1 X(ln f)g(Z,W) &=& -g(A_{\omega T_1X}W + A_{\omega X}T_2W,Z) - \sin^2\theta_2g(\bar{\nabla}_ZW,X) \\ && - \sin 2\theta_2 Z(\theta_2)g(X,W) + g(\bar{\nabla}_Z\omega T_2W,X) \\ && - g(A_{\omega W}T_1X,Z).  
\end{eqnarray*}
Further, using orthogonality of vector fields and the relation (\ref{2.2}), (\ref{2.3}) and (\ref{4.1}), we obtain
\begin{eqnarray*}
	\sin^2\theta_1 X(ln f)g(Z,W) &=& -g(A_{\omega T_1X}W + A_{\omega X}T_2W,Z) \\&& + \sin^2\theta_2 X(ln f)g(Z,W) - g(A_{\omega T_2W}Z,X) \\ && -  g(A_{\omega W}T_1X,Z).
\end{eqnarray*}
Again using the symmetry of shape operator, we obtain (\ref{4.10}) from the above relation. Hence the proof is complete.
\end{proof}
\section{Characterization for warped product pointwise bi-slant submanifolds of locally product Riemannian manifolds.}
In this section, we will prove the characterization for warped product pointwise bi-slant submanifolds of locally product Riemannian manifolds. For this, we need the following well known theorem of Hiepko's.
\begin{theorem}\cite{Hiepko}
	Let $D_1$ and $D_2$ be two orthogonal distribution on a Riemannian manifold $M$. Suppose that $D_1$ and $D_2$ both are involutive such that $D_1$ is totally geodesic folation and $D_2$ is a spherical foliation. Then $M$ is locally isometric to a non-trial warped product $M_1 \times_fM_2$, where, $M_1$ and $M_2$ are integral manifolds of $D_1$ and $D_2$, respectively.
\end{theorem}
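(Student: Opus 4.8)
The plan is to reconstruct the warped product metric directly from the two foliations by passing to adapted local coordinates and reading off the structure of $g$ from the second fundamental forms of the leaves. First I would invoke the Frobenius theorem: since $TM = D_1 \oplus D_2$ with both $D_1$ and $D_2$ involutive, around each point there exist local coordinates $(x^1,\dots,x^p,y^1,\dots,y^q)$ with $D_1 = \mathrm{span}\{\partial_{x^a}\}$ and $D_2 = \mathrm{span}\{\partial_{y^\alpha}\}$; the integral manifolds are the slices $\{y=\mathrm{const}\}$ and $\{x=\mathrm{const}\}$, which will become $M_1$ and $M_2$. Orthogonality of the distributions forces the mixed metric components $g(\partial_{x^a},\partial_{y^\alpha})$ to vanish, so the metric splits without cross terms as $g = g_{ab}\,dx^a dx^b + g_{\alpha\beta}\,dy^\alpha dy^\beta$.

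Next I would exploit the totally geodesic hypothesis on $D_1$. The $D_2$-component of $\nabla_{\partial_{x^a}}\partial_{x^b}$ is governed by the Christoffel symbols $\Gamma^\gamma_{ab}$, and because the off-diagonal block vanishes these reduce to $-\tfrac12 g^{\gamma\delta}\partial_{y^\delta} g_{ab}$. Total geodesy of $D_1$ says precisely that this component is zero, hence $\partial_{y^\delta} g_{ab}=0$ and the block $g_{ab}=g_{ab}(x)$ depends only on the $M_1$-coordinates. This pins down the first factor metric $g_1$.

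The heart of the argument, and the step I expect to be the main obstacle, is extracting the warping function from the spherical hypothesis on $D_2$. Umbilicity of the $D_2$-leaves means the $D_1$-valued second fundamental form is pure trace, i.e.\ $-\tfrac12 g^{ab}\partial_{x^b}g_{\alpha\beta} = g_{\alpha\beta}H^a$ for a single mean-curvature field $H=H^a\partial_{x^a}$; this already forces $g_{\alpha\beta}(x,y)=\rho(x,y)\,\hat g_{\alpha\beta}(y)$, a conformal rescaling of a fixed fibre metric. What umbilicity alone cannot exclude is that $\rho$ still varies along the fibre, which would leave only a twisted product rather than a warped one. Here the spherical condition is indispensable: requiring $H$ to be parallel in the normal connection of the leaf (a connection controlled by the now totally geodesic $D_1$) translates into $\partial_{y^\alpha}(\partial_{x^b}\ln\rho)=0$, so that $\partial_{x^b}\ln\rho$ is a function of $x$ alone. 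Integrating gives $\rho(x,y)=f(x)^2\,\psi(y)$, and absorbing $\psi$ into $\hat g_{\alpha\beta}$ yields $g_{\alpha\beta}=f(x)^2\,(g_2)_{\alpha\beta}(y)$.

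Finally I would assemble the pieces: the metric reads $g=g_{ab}(x)\,dx^a dx^b + f(x)^2 (g_2)_{\alpha\beta}(y)\,dy^\alpha dy^\beta$, which is exactly the warped product $M_1\times_f M_2$ with warping function $f>0$ on $M_1$, matching the relation $\nabla_X Z = (X\ln f)Z$ of \eqref{4.1}. Non-triviality follows because a spherical foliation with nonvanishing mean curvature forces $f$ to be non-constant; the degenerate case $H\equiv 0$ would give $D_2$ totally geodesic and hence a Riemannian product, which is the situation excluded by the hypothesis. The only analytic caveat is that all the coordinate identifications are local, which is precisely why the conclusion asserts a local isometry to the warped product.
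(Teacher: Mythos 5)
Your proposal cannot be compared against an in-paper proof, because the paper does not prove this statement at all: it is Hiepko's theorem, quoted from the cited source \cite{Hiepko} and used as a black box in the proof of Theorem 5.2. Judged as a self-contained proof of the classical result, your argument is essentially correct, and it follows the standard local route (the twisted-product refinement in the style of Ponge--Reckziegel) rather than Hiepko's original argument: simultaneous Frobenius coordinates for the two complementary involutive distributions, block-diagonality of $g$ from orthogonality, $\partial_{y^\delta}g_{ab}=0$ from total geodesy of $D_1$, umbilicity of the $D_2$-leaves giving the conformal form $g_{\alpha\beta}(x,y)=\rho(x,y)\,\hat g_{\alpha\beta}(y)$, and parallelism of the mean curvature upgrading twisted to warped. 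Since the paper only invokes the local statement (``locally isometric''), this coordinate proof suffices, and it has the virtue of being elementary; what it does not recover is the global decomposition that Hiepko's theorem provides under additional hypotheses. Three points deserve to be made explicit to close small gaps: (i) integrating the matrix equation $\partial_{x^b}g_{\alpha\beta}=-2H_b\,g_{\alpha\beta}$ to get $g_{\alpha\beta}=\rho\,\hat g_{\alpha\beta}$ requires the $x$-closedness of the one-form $H_b\,dx^b$; this follows from equality of mixed partials applied to the same equation (or by dividing entries of the matrix relation), but it is not automatic from umbilicity alone and needs a line. (ii) Your translation of the spherical condition into $\partial_{y^\alpha}\partial_{x^b}\ln\rho=0$ rests on the vanishing of the mixed Christoffel symbols $\Gamma^a_{\alpha c}$, which in turn uses the already-established fact $g_{ab}=g_{ab}(x)$; your ordering of steps makes this legitimate, but the dependence should be stated. (iii) The ``non-trivial'' part of the conclusion hinges on the convention that a spherical foliation has nowhere-vanishing parallel mean curvature; under the weaker convention (umbilical with parallel, possibly zero, mean curvature) one only gets a warped product that may degenerate to a direct product, so you should say which definition you are using.
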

The following result provides a characterization for warped product pointwise bi-slant submanifolds of locally product Riemannian manifolds.
\begin{theorem}
		Let $M$ be a proper pointwise bi-slant submanifold of a locally product Riemannian manifold $\bar{M}$ with pointwise slant distributions $D_1$ and $D_2$. Then $M$ is locally a warped product pointwise bi-slant submanifold of the form $M_T \times_f M_\theta$, where $M_T$ and $M_{\theta}$ are pointwise slant submanifolds with distinct slant functions $\theta_1$ and $\theta_2$, respectively of $\bar{M}$ if and only if the shape operator of $M$ satisfies
		\begin{align}\label{5.1}
			A_{\omega T_1X}Z + A_{\omega X}T_2Z + A_{\omega T_2Z}X + A_{\omega Z}T_1X = (\sin^2\theta_2-\sin^2\theta_1)X(\mu)Z
		\end{align}
	for $X \in D_1$, $Z \in D_2$ and for a smooth function $\mu$ on $M$ satisfying $W(\mu) = 0$ for any $W \in D_2$.
\end{theorem}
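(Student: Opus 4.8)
The statement is an ``if and only if,'' so I would split the argument into the forward implication (warped product structure $\Rightarrow$ the shape-operator identity~(\ref{5.1})) and the converse (the identity plus the integrability/foliation conditions $\Rightarrow$ a local warped product). The forward direction is essentially already done: Theorem~4.4 establishes
\begin{align*}
	g(A_{\omega T_1X}W + A_{\omega X}T_2W,Z) + g(A_{\omega T_2W}X + A_{\omega W}T_1X,Z) = (\sin^2\theta_2 - \sin^2\theta_1)X(\ln f)g(Z,W),
\end{align*}
and since this holds for all $Z, W \in TM_\theta$, the polarized (free-$Z$) form is precisely~(\ref{5.1}) with $\mu = \ln f$. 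I would note that for a warped product $M_T \times_f M_\theta$ the warping function $f$ depends only on the base $M_T$, so $W(\ln f) = 0$ for every $W \in D_2$, which supplies the required condition $W(\mu) = 0$. Thus the forward direction is a short read-off from Theorem~4.4.

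The substance of the proof is the converse. The plan is to invoke Hiepko's Theorem~5.1, for which I must verify four things: that $D_1$ is integrable with totally geodesic leaves, and that $D_2$ is integrable with leaves that are totally umbilical with a parallel (nonzero) mean curvature vector, i.e. a spherical foliation. First I would establish integrability of $D_1$ and $D_2$; for a pointwise bi-slant submanifold this follows from the structure equations together with the Gauss and Weingarten formulas and relation~(\ref{2.9}), exploiting~(\ref{3.1}) that $T$ preserves each $D_i$. Next, to show $D_1$ is totally geodesic I would compute $g(\nabla_X Y, Z)$ for $X, Y \in D_1$, $Z \in D_2$ and show it vanishes: here Lemma~3.2(i), namely
\begin{align*}
	(\sin^2\theta_2 - \sin^2\theta_1)g(\nabla_XY,Z) = g(\sigma(X,Z),\omega T_1Y) + g(\sigma(X,T_2Z),\omega Y) + g(\sigma(X,Y),\omega T_2Z) + g(\sigma(X,T_1Y),\omega Z),
\end{align*}
reduces the problem to showing the right-hand side vanishes, which I would obtain by rewriting each term via~(\ref{2.4}) as a shape-operator pairing and applying the hypothesis~(\ref{5.1}) (properly polarized) to collapse them against $X(\mu)$; the properness assumption $\theta_1 \neq \theta_2$ lets me divide by $\sin^2\theta_2 - \sin^2\theta_1$.

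For the $D_2$-leaves I would similarly use Lemma~3.2(ii) to control $g(\nabla_Z W, X)$ for $Z, W \in D_2$, $X \in D_1$, and show that the $D_1$-component of $\nabla_Z W$ has the totally umbilical form $-g(Z,W)\,\mathbf{H}$ for a mean-curvature vector field pointing along the gradient of $\mu$. Concretely, I expect to produce $g(\nabla_Z W, X) = -X(\mu)\,g(Z,W)$ (up to the slant normalization), which simultaneously exhibits umbilicity and identifies the mean curvature with $-\nabla\mu$ restricted to $D_1$; the condition $W(\mu) = 0$ for $W \in D_2$ is exactly what guarantees this mean-curvature vector is parallel in the normal bundle of the leaf, making the foliation spherical rather than merely umbilical. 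Once both foliation conditions are verified, Hiepko's theorem gives the local isometry to $M_T \times_f M_\theta$ with $\ln f = \mu$, and the pointwise slant functions of the two factors are $\theta_1, \theta_2$ by construction. The main obstacle I anticipate is the bookkeeping in the totally geodesic and spherical computations: matching the four shape-operator terms coming from Lemma~3.2 against the single hypothesis~(\ref{5.1}) requires careful polarization (replacing $Y$ by $T_1 Y$, $Z$ by $T_2 Z$, and using $T_i^2 = \cos^2\theta_i\,I$) so that the mixed $\omega T_i$ terms line up correctly, and ensuring the properness hypothesis is used precisely where a division by $\sin^2\theta_2 - \sin^2\theta_1$ occurs.
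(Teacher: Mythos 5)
Your converse direction is sound, but your forward direction has a genuine gap. Equation (5.1) is an equality of \emph{tangent vectors}, and its right-hand side lies in $D_2$. Theorem 4.4 (relation (4.10)) only computes the inner products of the vector
$V = A_{\omega T_1X}Z + A_{\omega X}T_2Z + A_{\omega T_2Z}X + A_{\omega Z}T_1X$
against vectors of $TM_\theta$; so ``freeing $Z$'' (letting the pairing vector range over $D_2$) determines only the $D_2$-projection $P_2V = (\sin^2\theta_2-\sin^2\theta_1)X(\ln f)Z$. It says nothing about the $D_1$-component of $V$, which has no a priori reason to vanish: a term such as $g(A_{\omega T_1X}Z, Y) = g(\sigma(Z,Y),\omega T_1X)$, $Y \in D_1$, is not obviously zero. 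The paper closes exactly this hole with a separate computation that your proposal never invokes: from part (ii) of the third lemma of Section 4 (relation (4.7)), one gets $g(A_{\omega Y}Z + A_{\omega Z}Y, X) = 0$ for $X, Y \in TM_T$ and $Z \in TM_\theta$; replacing $Y$ by $T_1Y$ in this identity, then separately replacing $Z$ by $T_2Z$, and adding the two results gives $g(V, Y) = 0$ for all $Y \in TM_T$, i.e.\ $P_1V = 0$. Only the combination of this vanishing with (4.10) yields the full vector equation (5.1) with $\mu = \ln f$. Without it, your forward implication is incomplete.

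Your converse coincides with the paper's argument: Lemma 3.2(i), rewritten through (2.4) and the symmetry of the shape operator, turns its right-hand side into $g(A_{\omega T_1Y}Z + A_{\omega Z}T_1Y + A_{\omega Y}T_2Z + A_{\omega T_2Z}Y, X)$, which by hypothesis (5.1) equals $(\sin^2\theta_2-\sin^2\theta_1)Y(\mu)g(Z,X) = 0$ by orthogonality; dividing by $\sin^2\theta_2 - \sin^2\theta_1 \neq 0$ (properness) gives $g(\nabla_XY,Z)=0$, hence $D_1$ is integrable with totally geodesic leaves. Likewise Lemma 3.2(ii) with (5.1) gives $g(\nabla_ZW,X) = -X(\mu)g(Z,W)$, whose symmetry in $Z,W$ yields integrability of $D_2$, and which exhibits the leaves as totally umbilical with mean curvature the negative gradient of $\mu$, parallel because $W(\mu)=0$; Hiepko's theorem then finishes, exactly as in the paper. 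One misstatement to correct: your opening claim that integrability of $D_1$ and $D_2$ ``follows from the structure equations'' for a general pointwise bi-slant submanifold is false --- integrability is precisely what hypothesis (5.1) buys, via the two identities just quoted. It is harmless here only because your subsequent steps re-derive it correctly.
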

\begin{proof}
	Let  $M = M_T \times_f M_\theta$ be a warped product pointwise bi-slant submanifold of a locally product Riemannian manifold $\bar{M}$. Then from Lemma 4.2$(ii)$, we have
	\begin{align}\label{5.2}
		g(A_{\omega Y}Z + A_{\omega Z}Y,X) = 0
	\end{align}
for any $X, Y \in TM_1$ and $Z \in TM_2$. Interchanging $Y$ by $T_1Y$ in (\ref{5.2}), we obtain
\begin{align}\label{5.3}
	g(A_{\omega T_1Y}Z + A_{\omega Z}T_1Y,X) = 0.
\end{align}
Again interchanging $Z$ by $T_2Z$ in (\ref{5.2}), we obtain
\begin{align}\label{5.4}
		g(A_{\omega Y}T_2Z + A_{\omega T_2Z}Y,X) = 0.
\end{align}
Adding equations (\ref{5.3}) and (\ref{5.4}), we get
\begin{align}
	g(A_{\omega T_1Y}Z + A_{\omega Z}T_1Y + A_{\omega Y}T_2Z + A_{\omega T_2Z}Y,X) = 0.
\end{align}
Then (\ref{5.1}) follows from (\ref{4.10}) by using the above fact.\\
\par Conversely, if $M$ be a pointwise bi-slant subamnifold of a locally product Riemannian manifold with pointwise slant distributions $D_1$ and $D_2$ such that (\ref{5.1}) holds, then from Lemma 3.2$(i)$, we have
\begin{eqnarray*}
	(\sin^2\theta_2 - \sin^2\theta_1)g(\nabla_XY,Z) &=& g(A_{\omega T_1Y}Z + A_{\omega Z}T_1Y\\ &&  +  A_{\omega Y}T_2Z + A_{\omega T_2Z}Y,X)
\end{eqnarray*}
for any $ X,Y \in D_1$ and $Z \in D_2$. Using the above condition (\ref{5.1}), we have
\begin{align*}
	g(\nabla_XY,Z) = X(\mu)g(X,Z) = 0
\end{align*}
which indicates that the leaves of the distributions are totally geodesic in $M$.\\
On the other hand, from Lemma 3.2$(ii)$, we have 
\begin{eqnarray*}
	(\sin^2\theta_1 - \sin^2\theta_2)g(\nabla_ZW,X)&=&g(A_{\omega T_2W}X + A_{\omega W}T_1X \\ && + A_{\omega T_1X}W + A_{\omega X}T_2W,Z). 
\end{eqnarray*} 
From the hypothesis of the theorem i.e., (\ref{5.1}), we get
\begin{align}\label{5.6}
	g(\nabla_ZW,X) = -X(\mu)g(Z,W).
\end{align}
By polarization, we arrive at
\begin{align}\label{5.7}
	g(\nabla_WZ,X) = -X(\mu)g(Z,W).
\end{align}
On subtracting (\ref{5.7}) from (\ref{5.6}) and by the definition of Lie bracket, we obtain $g([Z,W],X) = 0$, which depicts that the distribution $D_2$ is integrable. If we consider a leaf $M_2$ of $D_2$ and the second fundamental form $\sigma_2$ of $M_2$ in $M$, then from (\ref{5.6}), we have
\begin{align*}
	g(\sigma_2(Z,W),X) = g(\nabla_ZW,X) = -X(\mu)g(Z,W).
\end{align*}
Now, by the definition of the gradient we have $\sigma_2(Z,W) = -\bar{\nabla}_{\mu}g(Z,W)$, such that $\bar{\nabla}_{\mu}$ is the gradient of $\mu$. The above relations shows that the leaf of $M_2$ is totally Umbilical in $M$ with the mean curvature vector $H_2 = -\bar{\nabla}_{\mu}$. Since $W(\mu) = 0$ for any $W \in D_2$, which clearly shows that the mean curvature is parallel. Thus, the spherical condition is satisfied. Then by Hiepko's Theorem $M$ is locally a warped product pointwise bi-slant submanifold. Hence the proof is complete.
\end{proof}
\par The following immediate consequences of the above theorem are given below:\\\\
 1. In Theorem 5.2, if $\theta_1=0$ and $\theta_2 = \theta$, a slant function, then the submanifold $M$ of locally product Riemannian manifold $\bar{M}$ becomes a pointwise semi-slant submanifold which has been studied in \cite{JGP}. In this case, the first two terms in the left hand side of (\ref{5.1}) vanish identically. Thus, the relation (\ref{5.1}) is true for pointwise semi-slant warped product and it reduces to
\begin{align*}
	A_{\omega TZ}X + A_{\omega Z}FX = (\sin^2\theta)X(\mu)Z
\end{align*}
for $X \in D_1$ and $Z \in D_2$, where $D_1$ and $D_2$ are complex and proper pointwise slant distributions of $M$. The same has been proved in \cite{JGP}.\\\\
 2. In Theorem 5.2, if we consider $\theta_1 = \theta$ a constant slant angle and $\theta_2 = \frac{\pi}{2}$, then it is a case of hemi-slant warped products. In this case, the second and third term in the left hand side of (\ref{5.1}) vanish identically. Thus the relation (\ref{5.1}) is true for hemi-slant warped products and it reduces to 
\begin{align*}
	A_{\omega TX}Z + A_{FZ}TX = (\cos^2\theta)X(\mu)Z
\end{align*}
for $X \in D_{\theta}$ and $Z \in D^\perp$, where $ D_{\theta}$ and $ D^\perp$ are proper slant and totally real distributions.\\\\
 3. In Theorem 5.2, if $\theta_1 = 0$ and $\theta_2 = \frac{\pi}{2}$, then it is a case of CR-warped product. In this case all the terms in the left hand side of(\ref{5.1}) vanish identically. Thus the relation (\ref{5.1}) is true for CR-warped products and it will be 
\begin{align*}
	A_{FZ}FX = X(\mu)Z
\end{align*}
for $X \in D$ and $Z \in D^{\perp}$, where $D$ and $D^{\perp}$ are complex and totally real distributions of $M$.
\section{Some examples on warped product pointwise bi-slant submanifolds of locally product Riemannian manifold.}
\begin{example}
	Let $\mathbb{R}^4$ be the Euclidean space with the cartesian coordinates given by $(x_1,x_2,y_1,y_2)$ and the almost product structure 
	\begin{align*}
		F\bigg(\frac{\partial}{\partial x_i}\bigg) = \frac{\partial}{\partial x_i}, \hspace*{0.5cm} 	F\bigg(\frac{\partial}{\partial y_j}\bigg) = -\frac{\partial}{\partial y_j}, 1\leq i,j \leq 2. 
	\end{align*}
A submanifold $M$ of $\mathbb{R}^4$ defined by
\begin{align*}
\chi (u,v,w) = (wu\cos v, wu\sin v, w\cos v,w\sin v).	
\end{align*}
It is easy to see that the tangent bundle $TM$ of $M$ is spanned by the following vectors
\begin{eqnarray*}
	v_1 = w\cos v \frac{\partial}{\partial x_1} + w\sin v \frac{\partial}{\partial x_2},
\end{eqnarray*}
\begin{eqnarray*}
	v_2 = -wu\sin v \frac{\partial}{\partial x_1} + wu\cos v \frac{\partial}{\partial x_2} - w\sin v \frac{\partial}{\partial y_1} + w\cos v \frac{\partial}{\partial y_2},
\end{eqnarray*}
\begin{eqnarray*}
	v_3 = u\cos v \frac{\partial}{\partial x_1} + u\sin v \frac{\partial}{\partial x_2} + \cos v \frac{\partial}{\partial y_1} + \sin v \frac{\partial}{\partial y_2}.
\end{eqnarray*}
Then , clearly we obtain
\begin{eqnarray*}
	Fv_1 = w\cos v \frac{\partial}{\partial x_1} + w\sin v \frac{\partial}{\partial x_2},
\end{eqnarray*}
\begin{eqnarray*}
	Fv_2 = -wu\sin v \frac{\partial}{\partial x_1} + wu\cos v \frac{\partial}{\partial x_2} + w\sin v \frac{\partial}{\partial y_1} - w\cos v \frac{\partial}{\partial y_2},
\end{eqnarray*}
\begin{eqnarray*}
	Fv_3 = u\cos v \frac{\partial}{\partial x_1} + u\sin v \frac{\partial}{\partial x_2} - \cos v \frac{\partial}{\partial y_1} - \sin v \frac{\partial}{\partial y_2}.
\end{eqnarray*}
Then, we find that $D_1 = span\{v_1,v_3\}$ is a proper pointwise slant distribution with slant angle $\theta_1 = \cos^{-1}\big(\frac{u}{\sqrt{1+u^2}}\big)$ and $D_2 = span \{v_2\}$ is again a proper pointwise slant distribution with slant angle $\theta_2 = \cos^{-1}\big(\frac{u^2-1}{u^2 +1}\big)$. Thus, $M$ is a proper pointwise bi-slant submanifold of $\mathbb{R}^4$.\\
\par It is easy to verify that both the distributions $D_1$ and $D_2$ are integrable. If we denote the integrable manifolds of $D_1$ and $D_2$ by $M_T$ and $M_{\theta}$, respectively. Then the metric tensor $g$ of product manifold $M$ is given by 
\begin{align*}
	g = g_{M_T} + w^2(1 + u^2)g_{M_\theta},
\end{align*}
where,
\begin{align*}
	g_{M_T} = w^2 du^2 + (1 + u^2) dw^2 \hspace{0.5cm} and \hspace{0.5cm} g_{M_\theta} = dv^2.
\end{align*}
Hence, $M$ is a proper non-trival warped product pointwise bi-slant submanifold of $\mathbb{R}^4$ with warping function $ f = \sqrt{w^2(1+u^2)}$ and whose bi-slant angles $\theta_1,\theta_2 \neq 0, \frac{\pi}{2}$.

\end{example}
\begin{example}
	Let $\mathbb{R}^6 = \mathbb{R}^3 \times \mathbb{R}^3$ be a locally product Riemannian manifold with cartesian coordinates $(x_1, x_2, x_3, y_1, y_3,y_3)$. Consider a submanifold $M$ of $\mathbb{R}^6$ defined by 
	\begin{align*}
		\chi(u,v,w) = (v\cos u, v\sin u, -v+w, w\cos u, w\sin u, v+w),
	\end{align*}
	with almost product structure $F$ defined by 
	\begin{align*}
		F\bigg(\frac{\partial}{\partial x_i}\bigg) = - \bigg(\frac{\partial}{\partial x_i}\bigg), \hspace{0.5 cm} F\bigg(\frac{\partial}{\partial y_j}\bigg) =  \bigg(\frac{\partial}{\partial y_j}\bigg), 1 \leq i,j \leq 3.
	\end{align*}
	It is easy to see that its tangent space $TM$ of $M$ is spanned by the following vectors
	\begin{align*}
		v_1 = -v\sin u \frac{\partial}{\partial x_1} + v\cos u \frac{\partial}{\partial x_2} - w\sin u\frac{\partial}{\partial y_1} + w\cos u \frac{\partial}{\partial y_2},
	\end{align*}
	\begin{align*}
	v_2 = \cos u \frac{\partial}{\partial x_1} + \sin u  \frac{\partial}{\partial x_2} - \frac{\partial}{\partial x_3} + \frac{\partial}{\partial y_3},
\end{align*}
\begin{align*}
	v_3 = \frac{\partial}{\partial x_3} + \cos u \frac{\partial}{\partial y_1} + \sin u \frac{\partial}{\partial y_2} + \frac{\partial}{\partial y_3}.
\end{align*}
Then, we have 
	\begin{align*}
	Fv_1 = v\sin u \frac{\partial}{\partial x_1} - v \cos u \frac{\partial}{\partial x_2} - w \sin u \frac{\partial}{\partial y_1} + w \cos u  \frac{\partial}{\partial y_2},
\end{align*}
\begin{align*}
	Fv_2 = -\cos u \frac{\partial}{\partial x_1} - \sin u  \frac{\partial}{\partial x_2} +  \frac{\partial}{\partial x_3} +  \frac{\partial}{\partial y_3},
\end{align*}
\begin{align*}
	Fv_3 = -\frac{\partial}{\partial x_3} + \cos u \frac{\partial}{\partial y_1} + \sin u  \frac{\partial}{\partial y_2 } + \frac{\partial}{\partial y_3}.
\end{align*}
Let us put $D_1 = span \{v_1\}$ is a proper slant distribution with slant angle $\theta_1 = \cos^{-1}\big(\frac{w^2-v^2}{w^2 +v^2}\big)$ and $D_2 = span \{v_2,v_3\}$ is again a proper slant distribution with slant angle $\theta_2 = \cos^{-1}\big(\frac{2}{3}\big)$. Hence the submanifold $M$ defined by $\chi$ is a bi-slant submanifold.
\par It is easy to verify that both the distributions $D_1$ and $D_2$ are integrable. If we denote the integrable manifolds of $D_1$ and $D_2$ by $M_T$ and $M_{\theta}$, respectively. Then the metric tensor $g$ of product manifold $M$ is given by 
\begin{align*}
	g = g_{M_{\theta}} + (v^2 + w^2)g_{M_T}
\end{align*}
where
\begin{align*}
	g_{M_\theta} = 3(dv^2+dw^2) \hspace{0.5cm} and \hspace{0.5cm} g_{M_T} = du^2.
\end{align*}
Hence, $M$ is a proper non-trival warped product bi-slant submanifold of $\mathbb{R}^6$ with warping function $ f = \sqrt{v^2 + w^2}$ and whose bi-slant angles $\theta_1,\theta_2 \neq 0, \frac{\pi}{2}$.
 \end{example}

\textbf{Conflicts of Interest}: The authors declare no conflict of interest.

\end{document}